\date{\today}
\title{Detecting Thom faults in stratified mappings}
\author{Saurabh Trivedi \& David Trotman}
\address{Institute of Mathematics of the Polish Academy of Sciences (division in Krakow), Ul. Tomasza 30, 31-027  Krakow, Poland}
\email{saurabh.trivedi@gmail.com}
\address{LATP (UMR 7353), Centre de Math\'ematiques et Informatique, Aix-Marseille Universit\'e, 39 rue Joliot-Curie, 13453 Marseille Cedex 13, France.}
\email{trotman@cmi.univ-mrs.fr}
\begin{document}
\newtheorem{thm}{Theorem}[section]
\newtheorem{lem}[thm]{Lemma}

\theoremstyle{theorem}
\newtheorem{prop}[thm]{Proposition}
\newtheorem{defn}[thm]{Definition}
\newtheorem{cor}[thm]{Corollary}
\newtheorem{example}[thm]{Example}
\newtheorem{xca}[thm]{Exercise}

\newcommand{\bb}{\mathbb}
\newcommand{\al}{\mathcal}
\newcommand{\ak}{\mathfrak}
\newcommand{\fs}{\mathscr}

\theoremstyle{remark}
\newtheorem{rem}[thm]{Remark}
\parskip .12cm

\maketitle
\begin{abstract} 
We state and prove several characterizations of Thom's regularity condition for stratified maps. In particular we extend to stratified maps some characterizations of Whitney (a) regularity, due to the second author.
\end{abstract}

\vspace{-0.8cm}

\section{Introduction}

One of the most important properties of Whitney $(a)$-regularity is that it is necessary and sufficient for the stability of transversality to stratifications. Necessity was proved in \cite{Trotman} by the second author, while Feldman proved sufficiency \cite{Feldman}. It was conjectured in the doctoral dissertation of the second author \cite{Trotman6} that  this characterization can be generalized to the statement that Thom regularity is necessary and sufficient for the stability of transversality to foliated stratifications. 

If the usual Thom transversality theorem were true for transversality to foliations, which is not the case, the conjecture would follow immediately because the method used by the second author would go through. We present a proof of this conjecture using a new method that applies also to the case of  Whitney $(a)$-regularity, moreover we generalize the result of Feldman and Trotman to what we call prestratifications. 

Thom regularity for stratified maps occurs frequently in singularity theory and its applications to dynamical systems, notably in Hilbert's 16th problem about limit cycles, as in the work of Ilyashenko-Kaloshin  \cite{Ilyashenko} and Kaloshin \cite{Kaloshin}, so that equivalent geometric properties are potentially of great interest to specialists. Few previous results exist of this kind, for example see the work of Koike \cite{Koike} and Sch\"urmann \cite{Schurmann}. An important result in equisingularity theory for families of complex hypersurfaces defined by a function $F$, due to L\^e Dung Tr\`ang and K. Saito,  says that the family has constant Milnor number if and only if $F$ is Thom regular \cite{Saito}. Here we present some more geometric properties of Thom regularity.

Section 2 defines the notions of prestratifications, stratifications, Thom regularity for stratified maps, faults and detectors. 

Section 3 shows that  Thom regularity is sufficient for the stability of transversality of foliated prestratifications. This generalizes greatly Proposition 3.6 on page 196 in Feldman \cite{Feldman}.

Section 4 opens with some examples showing that transversality to foliations is not a generic condition and that we cannot detect Thom faults using embeddings. This shows that the method of the second author from \cite{Trotman} in proving the necessity of Whitney $(a)$-regularity for the stability of transversality does not work as a way of proving necessity of Thom regularity for stability of transversality to foliated stratifications. A new method is presented here allowing us to give a short proof of this result. This exploits the fact that a map is transverse to any submanifold if its rank is equal to the dimension of  the target manifold.

In Section 5 after recalling the definition of $(t_f)$-regularity we show that it implies Thom regularity in the case of subanalytic stratifications, where the curve selection lemma holds. That $(a_f)$ implies $(t_f)$ is trivial since  spanning is an open condition and it is used in several places in Kaloshin \cite{Kaloshin}.

In Section 6 we prove the equivalence of Thom regularity and a geometric version of Thom regularity analogous to the geometric version of Whitney $(a)$-regularity that was conjectured  to be equivalent to $(a)$-regularity by Wall \cite{Wall3} and proved to be equivalent by Trotman \cite{Trotman4}, then by Hajto \cite{Hajto} and Perkal \cite{Perkal}. We use a foliated version of the perturbation lemma of Perkal \cite{Perkal}. A similar result is proved by Koike \cite{Koike} but our method is much simpler.

\section{Definitions}

\subsection{Prestratifications and stratifications}

Let $V$ be a closed subset of a $C^1$-manifold $N$. A \emph{prestratification} $\Sigma$ of $V$ is a collection of pairwise disjoint subsets $\{S_{\alpha}\}_{\alpha \in \Lambda}$ of $V$ such that:

1. $\cup_{\alpha \in \Lambda} S_{\alpha} = V$.

2. For every $\alpha \in \Lambda$, $S_{\alpha}$ is an embedded connected submanifold of $N$. We call $S_{\alpha}$'s strata of $\Sigma$. 

3. Every point in $V$ has a neighbourhood in $N$ which intersects only finitely many strata. This is called local finiteness.

By the frontier of a subset $S \subset N$ we mean $\overline S \setminus S$. A prestratification is said to be a \emph{stratification} if it satisfies the \emph{frontier condition}, i.e. the frontier of every stratum is a union of some other strata.

\begin{figure}[hi]
\begin{center}
\includegraphics{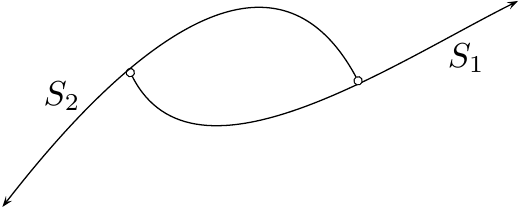}
\caption{Prestratification}\label{nofront}
\end{center}
\end{figure}

In a prestratification it is possible that none of the strata be a closed set, see Figure \ref{nofront}.

\subsection{Thom regularity} Let $N$ and $P$ be $C^1$-manifolds and $f : N \rightarrow P$ be a $C^1$-map of constant rank on a submanifold $S$ of $N$. Then, $f$ induces a foliation on $S$, denoted $\ak F^f_X$. If $x \in S$, we denote by $T_x(\ak F^f_S)$ the tangent space of the leaf of the foliation $\ak F^f_S$ passing through $x$. These notations will be used everywhere in the article.

Let $X$ and $Y$ be submanifolds of $N$ such that $f$ has constant rank on both $X$ and $Y$. The submanifold $X$ is said to be Thom $(a_f)$-regular over $Y$ at a point $y \in Y \cap \overline X$, if 

{\bf $\boldsymbol{(a_f)}$-regularity - }for every sequence $\{x_i\}$ of points in $X$ converging to $y$ such that $T_{x_i}(\ak F^f_X)$ converges to $\tau$, we have $T_y(\ak F^f_Y) \subset \tau$. 

This definition is equivalent to the original definition of Thom regularity as given in Mather \cite{Mather3}. 

Let $\Sigma$ be a prestratification of a closed subset $V$ in $N$ such that $f$ has constant rank on every stratum of $\Sigma$. We will call such a map a \emph{stratified map}\index{Stratified map} though the usual definition of a stratified map is stronger, see Koike \cite{Koike}. Let $X$ and $Y$ be two strata of $\Sigma$. Then the pair $(X,Y)$ is said to be $(a_f)$-regular if $X$ is $(a_f)$-regular over $Y$ at every point in $Y \cap \overline X$ and $Y$ is $(a_f)$-regular over $X$ at every point in $X \cap \overline Y$. Also $\Sigma$ is said to be $(a_f)$-regular if every pair of strata in $\Sigma$ is $(a_f)$-regular.

We show by examples that Thom $(a_f)$-regularity does not imply Whitney $(a)$-regularity and vice-versa; see Mather \cite{Mather3} for the definition of Whitney $(a)$ regularity. However, if $f$ is constant on the strata of a prestratification $
\Sigma$ then $(a_f)$ in this case is equivalent to $(a)$. 

1. Let $N = \bb R^3$, $S_1 = \{z = 0, y >0\}$ and $S_2 = \{y = 0 \}$. Then, $S_1$ is not $(a)$-regular over $S_2$ at any point on $x$-axis. 
 
 Define $f : \bb R^3 \rightarrow \bb R$ by $f(x,y,z) = y + z$. Then the resulting foliated prestratification is $(a_f)$-regular. See Figure \ref{thom}.

2. Let $N = \bb R^3$, $S_1 = \{y>0, z <0, y = z^2\}$ and $S_2=\{y=0\}$. Then, $S_1$ is $(a)$-regular over $S_2$ at every point on the $x$-axis.

 Define $f : \bb R^3 \rightarrow \bb R$ by $f(x,y,z) = y$.  Notice that the fibers of $f$ give a foliation of $S_1$ whose leaves are lines parallel to $x$-axis that lie on $S_1$. The foliation on $S_2$ induced by $f$ is $S_2$ itself. The resulting foliated prestratification is not $(a_f)$-regular. See Figure \ref{thom}.

\begin{figure}[hi]
\begin{center}
\includegraphics{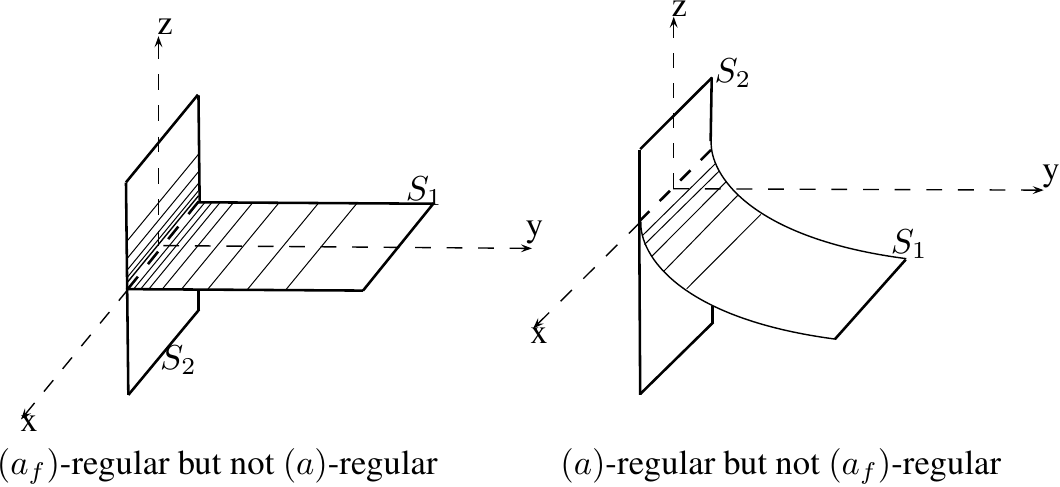}
\caption{Thom regularity}\label{thom}
\end{center}
\end{figure}

\subsection{Faults and Detectors} When some regularity condition $E$ is not satisfied at a point of a prestratification, we call the point an $E$-fault. Many proofs showing that one regularity condition implies another are by contradiction; we suppose that the second condition fails, and then we show that the first condition necessarily fails as well. When we can do this we say we have detected the fault (the point where the second condition fails).

\section{Transversality to Thom regular stratifications is a stable condition} Let $M$, $N$ and $P$ be $C^1$-manifolds. Recall that a $C^1$-map $g: M \rightarrow N$ is said to be transverse to a submanifold $S$ of $N$ at a point $w \in M$, denoted $g \pitchfork_w S$, if either $g(w) \not\in S$ or $g(w) \in S$ and $Dg_w(T_wM) + T_{g(w)}S = T_{g(w)} N$. If $g$ is transverse to $S$ at all point $w\in M$ we write $g \pitchfork S$. 

If $S$ is a stratum of a prestratification $\Sigma$ of a closed subset in $N$ and $f: N \rightarrow P$ is a stratified map, then
\begin{align*}
g \pitchfork \ker d_x(f|_S) \,\,\forall x \in S & \Leftrightarrow  g \pitchfork \ak F^f_S\\
& \Leftrightarrow   g \pitchfork \text{fibres of} \,\,f|_S\\
& \Leftrightarrow  f|_S \circ g : M \rightarrow f(S)\,\, \text{is a submersion}.
\end{align*}

We write $g \pitchfork_K \ak F^f_{\Sigma}$ to say that $f$ is transverse to every leaf of every stratum of $\Sigma$ at points of $K \subset M$. If $K = M$ we simply write $f \pitchfork \ak F^f_{\Sigma}$. Denote by $C^1(M,N)$ the set of all $C^1$-maps between $M$ and $N$. We prove:

\begin{thm}\label{thm21} Let $\Sigma$ be a prestratification of a closed subset $V$ of a $C^1$-manifold $N$, and let $f : N \rightarrow P$ be a stratified map. If $\Sigma$ is an $(a_f)$-regular prestratification, then

i. for every $C^1$-manifold $M$ and any compact set $K \subset M$, $T_K =\{g \in C^1(M,N) : g \pitchfork_K \ak F^f_{\Sigma}\}$ is open in $C^1(M,N)$ with the weak topology. 

ii. for every $C^1$-manifold $M$ and any closed set $K \subset M$, the set $T_K = \{g \in C^1(M,N) : g \pitchfork_K \ak F^f_{\Sigma}\}$ is open in $C^1(M,N)$ with the strong topology.
\end{thm}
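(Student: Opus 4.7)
The plan is a contradiction argument localised at each $w \in K$, leveraging local finiteness of $\Sigma$ and the $(a_f)$-regularity hypothesis.

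Fix $g \in T_K$ and $w \in K$. If $g(w) \notin V$, then since $V$ is closed, any $C^0$-small perturbation of $g$ still misses $V$ on a neighbourhood of $w$, giving vacuous transversality there. The substantive case is $g(w) = y$ lying in a stratum $Y$; by local finiteness, only finitely many strata meet a small neighbourhood of $y$ in $N$, and only those $X$ with $y \in \overline X$ can supply accumulating obstructions.

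Suppose $g$ is not an interior point of $T_K$ in the relevant topology. Then I extract sequences $g_n \to g$ in $C^1$ and $w_n \to w$ with $g_n$ not transverse to $\ak F^f_\Sigma$ at $w_n$. Set $y_n = g_n(w_n) \to y$. By local finiteness, after passing to a subsequence the points $y_n$ lie in a single stratum $X$ with $y \in \overline X$, and passing to a further subsequence I may assume $T_{y_n}(\ak F^f_X) \to \tau$ in the Grassmannian of subspaces of dimension $\dim \ak F^f_X$. If $X = Y$, continuity of the leaf distribution on $Y$ forces $\tau = T_y(\ak F^f_Y)$; if $X \neq Y$, $(a_f)$-regularity at $y \in Y \cap \overline X$ gives $T_y(\ak F^f_Y) \subset \tau$. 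Either way, using $Dg_{n,w_n} \to Dg_w$ and $g \pitchfork_w \ak F^f_Y$,
\[
Dg_w(T_w M) + \tau \supset Dg_w(T_w M) + T_y(\ak F^f_Y) = T_y N.
\]
Spanning is an open condition on pairs of subspaces of fixed dimensions, so for large $n$ one has $Dg_{n,w_n}(T_{w_n} M) + T_{y_n}(\ak F^f_X) = T_{y_n} N$, contradicting non-transversality of $g_n$ at $w_n$.

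For part (i), compactness of $K$ lets me cover it by finitely many such local neighbourhoods and take the minimum of the associated weak $C^1$-tolerances, producing a weak-topology neighbourhood of $g$ inside $T_K$. For part (ii), a locally finite cover of $M$ by open sets of compact closure reduces each piece to the compact case, and the piecewise tolerances assemble into a basic strong-topology neighbourhood of $g$ inside $T_K$. The main obstacle is the case $X \neq Y$: the only way to rule out a ``jumping-down'' phenomenon in which $\tau$ is strictly smaller than $T_y(\ak F^f_Y)$ and the approximate sums fail to span is $(a_f)$-regularity, whose use is essentially forced by the contradiction setup.
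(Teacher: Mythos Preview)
Your argument is correct and follows essentially the same route as the paper: the paper isolates the local contradiction step as a separate lemma (Lemma~\ref{osti}) and then pieces together over $K$ via finite (resp.\ locally finite) covers exactly as you describe, and the core contradiction---pass to a subsequence in a single stratum $X$, take a Grassmannian limit $\tau$, invoke $(a_f)$ to get $T_y(\ak F^f_Y)\subset\tau$, then contradict transversality of $g$ at $w$---is identical. One small phrasing fix: where you write ``Suppose $g$ is not an interior point of $T_K$'' you actually need the negation of the \emph{local} claim at your fixed $w$ (otherwise you cannot force $w_n\to w$ for that particular $w$); the paper's lemma is stated in precisely this local form.
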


To prove Theorem \ref{thm21} we need the following lemma.

\begin{lem}\label{osti}  Let $M$, $N$ and $P$ be $C^1$-manifolds and let $V \subset N$ be a closed set. Let $\Sigma$ be a prestratification of $V$ and $f: N \rightarrow P$ be a stratified map such that $\Sigma$ is $(a_f)$-regular. Let $g : M \rightarrow N$ be a $C^ 1$ map. If $w \in M$ is such that $g \pitchfork_w \ak F^f_\Sigma$  then there exists a coordinate chart $(\phi,\al U)$ of $M$  at $w$ and $(\psi,\al W)$ at $g(w)$ such that for each compact $K \subset \al U$ there is a weak neighbourhood\footnote{Set of all $C^1$-maps $h:M\rightarrow N$ such that $h(K) \subset \al W$, $||g_{\phi,\psi}(x) - h_{\phi,\psi}(x)|| < \epsilon$ and $||Dg_{\phi,\psi}(x) - Dh_{\phi,\psi}(x)|| < \epsilon$ for all $x \in \phi(K)$.} $\al N(g,(\phi,\al U),(\psi,\al W), K,\epsilon)$ of $g$ each of whose members $h$ satisfies $h\pitchfork_K \ak F^f_{\Sigma}$. 
\end{lem}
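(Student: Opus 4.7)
If $g(w) \notin V$ the lemma is trivial: by closedness of $V$ I pick a chart $(\psi, \al W)$ at $g(w)$ with $\al W \cap V = \emptyset$ and any $(\phi, \al U)$ with $g(\al U) \subset \al W$; transversality to $\ak F^f_\Sigma$ is vacuous for every $h$ whose $C^0$-distance to $g$ on $K$ is small enough to keep $h(K) \subset \al W$. Assume henceforth $g(w) \in V$ and let $Y$ be the stratum containing $g(w)$. Using local finiteness I choose a chart $(\psi, \al W)$ at $g(w)$ met by only finitely many strata; after shrinking to remove any stratum whose closure misses $g(w)$, I list those strata as $Y = Y_0, Y_1, \ldots, Y_m$ with $g(w) \in \overline{Y_i}$ for each $i$. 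Pick a chart $(\phi, \al U)$ at $w$ with $g(\al U) \subset \al W$.

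\textbf{Step 1: $g$ is transverse on a neighborhood of $w$.} I claim that after possibly shrinking $\al U$, $g \pitchfork_{w'} \ak F^f_\Sigma$ for all $w' \in \al U$. Suppose not: there exist $w_n \to w$ and strata $S_n$ with $g(w_n) \in S_n$ and $Dg_{w_n}(T_{w_n} M) + T_{g(w_n)} \ak F^f_{S_n} \neq T_{g(w_n)} N$. Since $g(w_n) \in \al W$ eventually, $S_n \in \{Y_0, \ldots, Y_m\}$, so I pass to a subsequence with $S_n$ constantly equal to some $S$ and with $T_{g(w_n)} \ak F^f_S$ converging in the Grassmannian to some $\tau$. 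Openness of transversality of pairs of subspaces makes non-transversality closed under limits, yielding $Dg_w(T_w M) + \tau \neq T_{g(w)} N$. But $g(w) \in \overline S$: if $S = Y$ then $\tau = T_{g(w)} \ak F^f_Y$ by continuity of the foliation tangent on $Y$; if $Y$ lies in the frontier of $S$ then $(a_f)$ forces $T_{g(w)} \ak F^f_Y \subset \tau$. Either way, the hypothesis $g \pitchfork_w \ak F^f_Y$ yields $Dg_w(T_w M) + \tau \supset Dg_w(T_w M) + T_{g(w)} \ak F^f_Y = T_{g(w)} N$, a contradiction.

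\textbf{Step 2: stability under $C^1$-perturbation.} Fix a compact $K \subset \al U$ and assume no $\epsilon$ works. I produce $h_n \to g$ in $C^1$ on $\phi(K)$ and $w_n \in K$ with $h_n \not\pitchfork_{w_n} \ak F^f_{S_n}$ for some stratum $S_n$. Uniform convergence together with $g(K) \subset \al W$ forces $h_n(w_n) \in \al W$ and hence $S_n \in \{Y_0, \ldots, Y_m\}$ eventually. Pass to a subsequence so that $w_n \to w^* \in K$, $S_n = S$ is constant, and $T_{h_n(w_n)} \ak F^f_S$ converges to some $\tau$; then $h_n(w_n) \to g(w^*)$ and, exactly as in Step 1, the limit of non-transversality gives $Dg_{w^*}(T_{w^*} M) + \tau \neq T_{g(w^*)} N$. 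Let $Y_j$ be the stratum containing $g(w^*)$: either $S = Y_j$ (so $\tau = T_{g(w^*)} \ak F^f_{Y_j}$ by continuity) or $Y_j \subset \overline S \setminus S$ (so $T_{g(w^*)} \ak F^f_{Y_j} \subset \tau$ by $(a_f)$). By Step 1 applied to $w^* \in \al U$, $g \pitchfork_{w^*} \ak F^f_{Y_j}$, whence $Dg_{w^*}(T_{w^*} M) + \tau \supset Dg_{w^*}(T_{w^*} M) + T_{g(w^*)} \ak F^f_{Y_j} = T_{g(w^*)} N$, a contradiction.

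\textbf{Main obstacle.} The delicate point in Step 2 is that the perturbed points $h_n(w_n)$ may lie on a stratum $S$ strictly deeper than the one containing the limit $g(w^*)$, so the limiting foliation tangent $\tau$ sits over $g(w^*)$ but is built from tangents on the distinct stratum $S$; bridging this gap is precisely the role of $(a_f)$. The remaining ingredients — local finiteness to reduce to finitely many strata, lower semi-continuity of $\dim(V+W)$ to carry non-transversality to the limit, compactness of the Grassmannian to extract convergent tangent subspaces, and continuity of $Dh_n$ on $\phi(K)$ — are standard.
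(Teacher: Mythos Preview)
Your argument is correct and close in spirit to the paper's, but the decomposition is genuinely different. The paper negates the full conclusion (``there exists a chart $\al U$ \ldots'') and then exploits the quantifier ``for every chart'' by taking a \emph{shrinking} basis $U_i$ of neighbourhoods of $w$; the bad compacts $K_i\subset U_i$ then force the failing points $w_i$ to converge to $w$ itself, so only $(a_f)$-regularity at the single point $g(w)$ is needed. You instead fix one chart $\al U$ in advance, first prove in Step~1 that $g$ is transverse to $\ak F^f_\Sigma$ throughout $\al U$, and then in Step~2 allow the failing points to accumulate at an arbitrary $w^*\in K$, invoking $(a_f)$ at $g(w^*)$ rather than at $g(w)$. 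Your route is more modular and makes the ``pointwise transversality $\Rightarrow$ local transversality'' step explicit; the paper's route is slightly sharper in that it establishes the lemma using $(a_f)$ only at $g(w)$, at the cost of a less transparent contradiction scheme. One small wording fix: in Step~2 you write ``$Y_j\subset\overline S\setminus S$'', which would need the frontier condition (not assumed for prestratifications); what you actually have and use is $g(w^*)\in Y_j\cap\overline S$, which is exactly the hypothesis required to apply $(a_f)$ at that point.
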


\begin{proof} Due to local finiteness of $\Sigma$ without loss of generality we can assume that $\Sigma$ has only two strata $X$ and $Y$. We have three cases, 

(1) $g(w) \notin V$,

(2)  $g(w) \in V$ but $g(w) \notin Y \cap \overline{X}$ and $g(w) \notin X \cap \overline{Y}$.

(3) $g(w) \in V$ and $g(w) \in X \cap \overline{Y}$ or $g(w) \in Y \cap \overline X$.

In the first two cases the result follows easily since $V$ is a closed set and the set of surjective linear maps forms an open set. This leaves the only interesting case (3).

So, suppose $g(w) \in V$ and $g(w) \in Y \cup \overline X$. Since $g(w) \in Y$, by the case (2) we can find coordinate charts $(\phi'',U'')$ around $w$ and $(\psi'',W'')$ around $g(w)$ such that for each compact $K \subset U''$ there is a weak subbasic neighbourhood $\al N(g,(\phi'', U''),(\psi'',W''),K,\epsilon'')$ such that every member of this neighbourhood is transverse to every leaf of the foliation $\ak F^f_Y$. 

Now, suppose that, contrary to the conclusion of the lemma, for each neighbourhood $U$ of $x$, there is a compact set $K \subset U$ such that every weak subbasic neighbourhood of $g$ contains a map which is not transverse to one of the leaves of the foliation $\ak F^f_X$ induced on $X$ by $f$.   

Choose $\{U_i\}_{i=1}^{\infty}$ to be a basis for the neighbourhoods of $w$. Then, for each $i$ there is a compact set $K_i \subset U_i$, a point $w_i \in K_i$ and a map $g_i \in \al N(g,(\phi|_{U_i},U_i),(\psi, W), K_i, 1/i)$ such that $g_i \not\pitchfork_{w_i} {\ak F^f_{X_{w_i}}}$, where $(\phi, U)$ and $(\psi, W)$ are fixed charts for $M$ and $N$ at $w$ and $g(w)$ respectively and $\ak F^f_{X_{w_i}}$ is the leaf of $\ak F^f_X$ passing through $g_i(w_i)$. 

Note first that for each $i$, we have $|\psi g_i(w_i) - \psi g(w_i)| < 1/i$
and also that there exist $\{\epsilon_i >0\}$ such that $\epsilon_i \rightarrow 0$ as $i \rightarrow \infty$ and $|\psi g(w_i) - \psi g(w)| < \epsilon_i$. Then by the triangle inequality, $g_i(w_i) \rightarrow g(w)$ as $i \rightarrow \infty$.

Since $g_i \not\pitchfork_{w_i} \ak F^f_{X_{w_i}}$, it follows that
$$ \dim T_{g_i(w_i)} N  > \dim\left (T_{g_i(w_i)}{\ak F^f_{X_{w_i}}}+ D_{w_i}g_i(T_{w_i}M)\right ).$$

Taking limits on both sides and using the properties of sequences of points in Grassmannians we have:
\begin{align*} 
\dim T_{g(w)}N & > \lim_{i\rightarrow  \infty} \dim \left (T_{g_i(w_i)}\ak F^f_{X_{w_i}}+ D_{w_i}g_{w_i}(T_{w_i}M)\right )\\
& =  \dim \lim_{i\rightarrow \infty} \left (T_{g_i(w_i)}\ak F^f_{X_{w_i}}+ D_{w_i}g_{w_i}(T_{w_i}M)\right )\\
& \geq \dim \left (\lim_{i\rightarrow \infty} T_{g_i(w_i)}\ak F^f_{X_{w_i}}+ \lim_{n\rightarrow \infty}  D_{w_i}g_{w_i}(T_{w_i}M)\right )\\
 &\geq \dim \left (\lim_{i\rightarrow \infty} T_{g_i(w_i)}\ak F^f_{X_{w_i}}+  D_wg(T_wM)\right ).
 \end{align*}
 
 But since $X$ is $(a_f)-$regular over $Y$ at $g(w)$ we have
$$\lim_{i\rightarrow \infty} T_{g_i(w_i)}{\ak F^f_{X_{w_i}}}\supset T_{g(w)} {\ak F^f_{Y_{w}}},$$
where $\ak F^f_{Y_w}$ is the leaf of the foliation $\ak F^f_Y$, induced by $f$ on $Y$, passing through $g(w)$. 

Thus it follows that
$$\dim T_{g(w)}N >  \dim\left ( T_{g(w)} {\ak F^f_{Y_w}}+ D_wg(T_wM)\right)$$
which is a contradiction to the fact that $g \pitchfork_w \ak F^f_{\Sigma}$. Thus, there exists a chart $(\phi',U')$ around $w$ and a chart $(\psi',W')$ around $g(w)$ such that for each compact $K \subset U'$ the subbasic neighbourhood of $g$, $\mathcal N(g,(\phi', U'),(\psi', W'),K,\epsilon')$ has the property that all its members are transverse to $ {\ak F^f_X}$ on all of $K$. 

Set $\al U = U'\cap U''$ and $\al W = W'\cap W''$. It is easy to see that for a suitable $\epsilon$ and any compact $K \subset \al U$, the subbasic neighbourhood $\mathcal N(g,(\phi, \al U),(\psi, \al W),K,\epsilon)$ satisfies,
\begin{align*}
\mathcal N(g,(\phi, \al U),(\psi, \al W),K,\epsilon) &\subset \mathcal N(g,(\phi, U'),(\psi, W'),K,\epsilon') \,\bigcap\\
&\qquad \mathcal N(g,(\phi'', U''),(\psi'', W''),K,\epsilon'')
\end{align*}
and all its members are transverse to $\ak F^f_X$ and $\ak F^f_Y$ on $K$.
\end{proof}

\begin{proof}[Proof of Theorem \ref{thm21}] The two parts will be treated separately.

{\bf i.} To prove that $T_K$ is open, we show that there exists a weak open neighbourhood of every map in $T_K$ contained in $T_K$. Take a map $g \in T_K$, since $g$ is transverse to $\ak F^f_\Sigma$ at each $w \in K$ and $\Sigma$ is $a_f$-regular, by Lemma \ref{osti}, for each $w \in K$ there exists a chart $U_w$ with the property that for each compact set $K_w \subset U_w$ there is a neighbourhood $\al N(g,(\phi_w,U_w),(\psi_w,V_w),K_w,\epsilon_w)$ such that each member of this neighbourhood is transverse to $\Sigma$ on all of $K_w$. Since $K$ is compact, we can choose a finite subcollection $\{U_{w_1},\ldots,U_{w_r}\}$ of the coordinate neighbourhoods $\{U_w\}_{w\in K}$, such that $K \subset \cup_{i=1}^r K_{w_i}$. But then the intersection 
$$\cap_{i=1}^r \al N(g,(\phi_{w_i},U_{w_i}),(\psi_{w_i},V_{w_i}),K_{w_i},\epsilon)$$
      ($\epsilon = \min \{\epsilon_{w_i}\}$) is a weak open neighbourhood of $g$ and is contained in $T_K$, as required.
      
{\bf ii.} First notice that every open covering of a  closed set of a smooth manifold has a locally finite open refinement. Now as in (i), by Lemma \ref{osti}, for each $w \in K$ there is a chart $U_w$ for $M$ which contains $w$ and has the property: for each compact set $K_w \subset U_w$ there is a neighbourhood $\al N(g,(\phi_w, U_w),(\psi_w,V_w), K_w,\epsilon_w)$ all of whose members are transverse to $\ak F^f_{\Sigma}$. Now choose a locally finite subcollection of the charts $\{U_w\}_{w\in K}$ which covers $K$. By the definition of the strong topology, the intersection of the weak subbasic neighbourhoods for this finite subcollection of charts gives a strong open neighbourhood all of whose members are transverse to $\ak F^f_{\Sigma}$ on $K$.
\end{proof}

\section{Stability of transversality implies Thom regularity} 

First we show that transversality of maps to foliations is not a generic condition. Consider the following examples. 

1. Let $M = S^1$ and $N = \bb R^2$ foliated by lines parallel to $x$-axis. Then the embedding of $M$ into $N$ is non transverse to this foliation and it cannot be made transverse by small perturbations. See Figure \ref{generic}.

2. Let $M = \bb R$ and $N = \bb R^2$ foliated by lines parallel to $x$-axis. Let $f : M \rightarrow N$ be given by $f(x) = (x,x^3-x)$. Then $f$ is non transverse to this foliation and it cannot be made transverse by small perturbations. See Figure \ref{generic}.

\begin{figure}[hi]
\begin{center}
\includegraphics{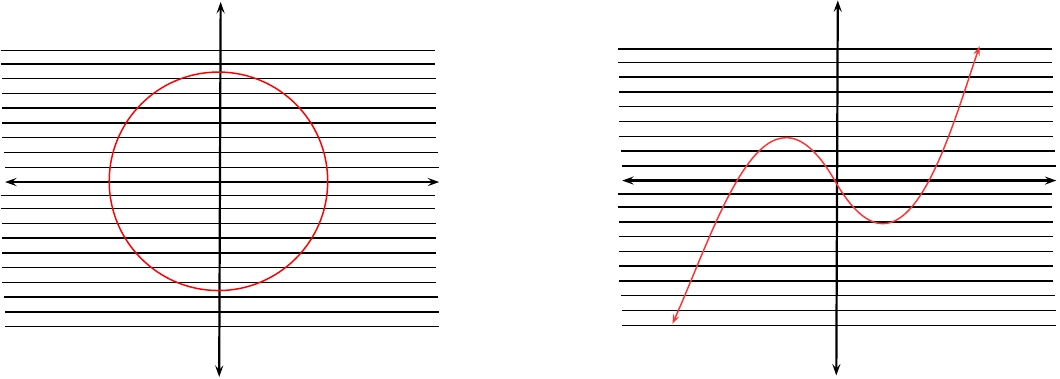}
\caption{Non-genericity}\label{generic}
\end{center}
\end{figure}

{\bf 2.} Let $M=S^2$, $N = \bb R^2$ and $S$ be the unit circle in $N$ considered as foliated by its points. Project $M$ onto $N$ in a way that the image of $M$ under this projection $f$ is a disc $D$ which does not entirely cover $S$. The rank of this projection $f$ at the points whose image intersect $S$ and lie in the interior of $D$ is $2$ and thus $f$ is transverse to the foliated circle at these points. But, we cannot find maps close to $f$ which are transverse on every point of $M$ because any sufficiently small perturbation of $f$ will not cover $S$ entirely and whenever the boundary of the image intersects the circle it will not be transverse to the foliated circle.

Secondly, we show that we cannot detect $(a_f)$-faults by embeddings. 

{\bf 3.} Consider the blow-up of $\bb R^2$ at $0$, given by $\beta : E \rightarrow \bb R^2$ where $E$ is the canonical line bundle over the real projective line $\bb R P^1$. Recall that $E$ is topologically a M\"obius band embedded into $\bb R^3$. We take the stratification of $E$ with two strata, $X = \beta^{-1}(0)$ and $Y = E\backslash X$. The blow-up map $\beta$ induces a foliation on $X$ and $Y$. The only leaf of this foliation of $X$ is $X$ itself and the leaves of the foliation of $Y$ are points. It is easy to see that this stratification of $E$ is not $a_{\beta}$-regular. 

Notice that we cannot detect the $a_{\beta}$-faults in this stratifications by perturbing embeddings of rank 2 or even rank 3. For, no embedding of rank 2 can intersect this foliated stratification transversely since by the definition of transversality we need to have a rank 3 map to intersect the foliation of $Y$. Thus the set of maps transverse to this stratification from a manifold of dimension $2$ is empty and so open and yet our stratification is not $a_{\beta}$ regular. 

\begin{figure}
\begin{center}
\includegraphics[scale=.75]{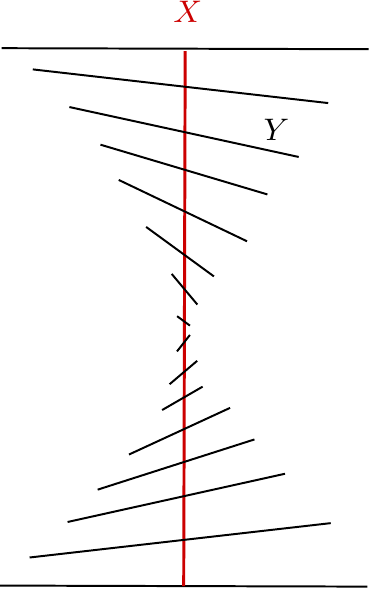}
\caption{Stratification of a Blow-up}
\end{center}
\end{figure}

Similarly any embedding of rank 3 is always transverse to this foliated stratification by the definition of transversality and thus we cannot find a sequence of embeddings of rank 3 not transverse to one foliated stratum but whose limit is transverse to the other foliated stratum.

This shows that the method used to prove that Whitney $(a)$-regularity is necessary and sufficient for the stability of transversality by the second author in \cite{Trotman} does not work to obtain a generalization for the Thom regularity. We will show however using a new method  that Thom regularity is necessary and sufficient for the stability of transversality to foliated stratifications. 

We will prove the following result:

\begin{thm}\label{thm31} Let $N$ and $P$ be $C^1$-manifolds. Let $f: N \rightarrow P$ be a stratified map for a prestratification $\Sigma$ of a closed subset $V$ of $N$. Then the following are equivalent:

(1) $\Sigma$ is $(a_f)$-regular,

(2) for any $C^1$-manifold $M$, the set $\{g \in C^1(M,N) : g \pitchfork \ak F^f_{\Sigma}\}$ is open in the strong topology,

(3) the set $\{g \in C^1(N,N) : g \pitchfork \ak F^f_{\Sigma}\}$ is open in the strong topology.
\end{thm}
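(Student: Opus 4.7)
The plan is to prove the cycle $(1)\Rightarrow(2)\Rightarrow(3)\Rightarrow(1)$. The first implication follows immediately from Theorem \ref{thm21}(ii) applied with the closed set $K=M$ (every $C^1$-manifold is closed in itself), and the second is the specialisation $M=N$. The substantive content is $(3)\Rightarrow(1)$, which I would attack by contrapositive: assuming $\Sigma$ fails $(a_f)$-regularity, I would exhibit a single $g_0\in C^1(N,N)$ with $g_0\pitchfork\ak F^f_\Sigma$ together with a sequence $g_i\to g_0$ in the strong $C^1$ topology, each $g_i$ failing transversality at one point, contradicting openness in (3).

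Unpacking the failure of $(a_f)$ gives strata $X,Y$, a point $y\in Y\cap\overline X$, and a sequence $\{x_i\}\subset X$ with $x_i\to y$, $T_{x_i}\ak F^f_X\to\tau$ in the Grassmannian, and $T_y\ak F^f_Y\not\subset\tau$. I would fix $v\in T_y\ak F^f_Y\setminus\tau$ and a hyperplane $A\subset T_yN$ containing $\tau$ but not $v$, chosen generically enough that in a chart $\phi:U\to\bb R^n$ centred at $y$ its affine realisation meets $X$ only at points where the foliation tangent space is not contained in $A$. Identifying $A$ with $\{x_n=0\}$ in the chart and letting $L$ be the linear projection onto $A$, I would define $g_0$ to equal $L$ (through $\phi$) on a neighbourhood $V$ of $y$, to equal the identity outside a slightly larger compact set, and to interpolate smoothly between. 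At $y$ we have $D_yg_0(T_yN)=A$ and $A+T_y\ak F^f_Y=T_yN$ since $v\notin A$, so $g_0\pitchfork_y\ak F^f_\Sigma$. The genericity on $A$ together with lower semicontinuity of $\dim(A+\cdot)$ and continuity of the foliation tangent spaces propagate this to all nearby points; using a monotone interpolating bump the transition region has rank $n$ (hence trivially transverse), and outside the chart $g_0=\mathrm{id}$ is a local diffeomorphism.

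For the perturbations, the convergence $T_{x_i}\ak F^f_X\to\tau\subset A$ lets me pick, for each $i$, a hyperplane $A_i\subset T_{x_i}N$ containing $T_{x_i}\ak F^f_X$ with $A_i\to A$; let $L_i$ be the projection onto $A_i$. In local coordinates I would define
\[
g_i(w)\;=\;g_0(w)+\psi(w)\bigl[(L_i-L)(\phi(w))+(x_i-y)\bigr],
\]
where $\psi$ is a bump supported in $V$ satisfying $\psi(y)=1$ and $d_y\psi=0$. A direct computation then yields $g_i(y)=x_i\in X$ and $D_yg_i(T_yN)=A_i\supset T_{x_i}\ak F^f_X$, so
\[
D_yg_i(T_yN)+T_{x_i}\ak F^f_X\;=\;A_i\;\neq\;T_{x_i}N,
\]
and $g_i$ fails transversality at $y$. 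Since $g_i-g_0$ is supported in $\phi^{-1}(\operatorname{supp}\psi)$ and has $C^1$-norm $O(\|L_i-L\|+\|x_i-y\|)\to 0$, we have $g_i\to g_0$ in the strong $C^1$ topology, the required contradiction.

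The hard part will be the genericity step in choosing $A$: one must ensure that $A$ can be picked, among hyperplanes through $\tau$ avoiding $v$, so that its local realisation meets $X$ only at points where $T_w\ak F^f_X\not\subset A$. The relevant set of ``bad'' hyperplanes is cut out by the continuously varying Grassmannian data $\{T_w\ak F^f_X\}_w$ along $X$ near $y$, and a dimension count (using $y\notin X$) shows it is nowhere dense in the family of admissible hyperplanes. Once such $A$ is fixed, the bump interpolation producing $g_0$ and the perturbation computation for $g_i$ are routine.
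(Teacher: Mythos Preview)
Your overall strategy matches the paper's: the implications $(1)\Rightarrow(2)\Rightarrow(3)$ are disposed of exactly as you say, and for $(3)\Rightarrow(1)$ the paper too argues by contrapositive, producing a transverse map $g$ together with nearby non-transverse $g_i$. The construction of the perturbations $g_i$ is essentially the same in both arguments.

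The substantive divergence is in how $g_0$ is built, and here your proposal has a genuine gap. You take $g_0$ to be a linear projection onto a hyperplane $A$ in a whole neighbourhood of $y$; thus $g_0$ has rank $n-1$ throughout that neighbourhood, and transversality to $\ak F^f_\Sigma$ must be verified at \emph{every} point $w$ there. Since the image of $g_0$ near $y$ is the affine hyperplane $A$, you need $A+T_p(\ak F^f_Z)=T_pN$ for every $p\in A\cap V$ near $y$ and every stratum $Z$ of $\Sigma$ --- not just $X$ and $Y$, but all strata touching the neighbourhood. Your ``dimension count (using $y\notin X$)'' does not establish this: the family of admissible hyperplanes $A$ (those containing $\tau$ and missing $v$) has dimension only $n-1-\dim\tau$, while the bad locus $\{p\in A\cap V:\,T_p(\ak F^f_Z)\subset A\}$ depends on the a priori uncontrolled behaviour of the foliations $\ak F^f_Z$ near $y$. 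Indeed the very examples preceding the theorem in the paper are there to warn that transversality to a foliation is \emph{not} a generic condition, so one cannot simply invoke genericity of $A$. For a concrete obstruction, note that along any sequence in $X$ approaching $y$ with leaf tangents converging into $A$ (and such sequences may well lie on $A$ for every admissible $A$), the required transversality can fail arbitrarily close to $y$.

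The paper sidesteps this entirely via Lemma~\ref{lem32}: it builds a bijective $C^1$ map $g:N\to N$ whose rank is $n$ at \emph{every} point except $y$, and at $y$ the rank equals $n-\dim T_y(\ak F^f_Y)$ with image the chosen subspace $H$ satisfying $H\oplus T_y(\ak F^f_Y)=T_yN$ and $H+\tau\neq T_yN$. Because $g$ has full rank away from $y$, transversality to every leaf of every stratum is automatic there, and at $y$ it holds by the first displayed condition on $H$; no genericity argument is needed at all. This is precisely the ``new method'' advertised in the introduction, and it is what your construction is missing.
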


To prove Theorem \ref{thm31} we need the following lemma:

\begin{lem} \label{lem32}Let $N$ be a smooth manifold of dimension $n \geq 2$ and let $x \in N$. Let $r$ be a positive integer strictly less than $n$. Then, there exists a bijective smooth map $g : N \rightarrow N$ such that 

i. the rank of $g$ at $x$ is $r$,

ii. the rank of $g$ at all points except $x$ is $n$.
\end{lem}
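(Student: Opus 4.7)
The construction is local near $x$ and then extended by the identity, so the first move is to pick a chart $(\phi,U)$ around $x$ with $\phi(x)=0$, containing the closed Euclidean ball $\overline{B}_1\subset\mathbb{R}^n$. It then suffices to construct a smooth bijection $\tilde g\colon\mathbb{R}^n\to\mathbb{R}^n$ whose rank is $r$ at the origin, $n$ at every other point, and which agrees with the identity outside $\overline{B}_1$; transporting by $\phi$ and gluing with the identity on $N\setminus\phi^{-1}(B_1)$ yields the desired $g$.

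To define $\tilde g$, fix a smooth weight $\rho\colon[0,\infty)\to[0,1]$ that is monotone non-decreasing, satisfies $\rho(0)=0$, $\rho(t)>0$ for $t>0$, and $\rho(t)=1$ for $t\geq 1$; the standard mollified bump $\beta(t)=e^{-1/t}/(e^{-1/t}+e^{-1/(1-t)})$ on $(0,1)$, extended by $0$ and $1$, has exactly these properties. Writing $y=(y_1,\dots,y_n)$ and $|y|^2=\sum y_i^2$, set
\[
\tilde g(y)=\bigl(y_1,\dots,y_r,\ \rho(|y|^2)y_{r+1},\dots,\rho(|y|^2)y_n\bigr).
\]
Since $\rho(|y|^2)=1$ for $|y|\geq 1$, $\tilde g$ coincides with the identity outside $\overline{B}_1$, so it is smooth and the extension by identity is smooth across $\partial B_1$.

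For the rank, a direct computation shows that $D\tilde g(y)$ is block lower triangular with blocks $I_r$ in the top left and $B(y):=\rho(|y|^2)I_{n-r}+2\rho'(|y|^2)\,zz^{T}$ in the bottom right, where $z=(y_{r+1},\dots,y_n)^{T}$. At $y=0$ we have $\rho(0)=\rho'(0)\cdot 0=0$, so $B(0)=0$ and $\operatorname{rank}D\tilde g(0)=r$. For $y\neq 0$ we have $\rho(|y|^2)>0$ and $\rho'(|y|^2)\geq 0$, so $B(y)$ is a non-negative rank-one perturbation of a positive multiple of the identity; checking that $B(y)v=0$ forces $v=0$ (split $v$ into components parallel and perpendicular to $z$) shows $B(y)$ is invertible, hence $\operatorname{rank}D\tilde g(y)=n$.

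It remains to verify bijectivity. Since $\tilde g$ is the identity outside $\overline{B}_1$ and carries $\overline{B}_1$ into itself (because $\rho\leq 1$), it suffices to show $\tilde g\colon\overline{B}_1\to\overline{B}_1$ is a bijection. Given the equation $\tilde g(y)=\tilde g(y')$, the first $r$ coordinates match and set a common value $c=y_1^2+\cdots+y_r^2$; the remaining equation $\rho(c+|z|^2)z=\rho(c+|z'|^2)z'$ forces $z$ and $z'$ to be parallel, and the strict monotonicity on $(0,\infty)$ of $t\mapsto t\,\rho(c+t^2)$ (its derivative $\rho(c+t^2)+2t^2\rho'(c+t^2)$ is strictly positive whenever $c+t^2>0$) forces $|z|=|z'|$ and then $z=z'$. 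The same monotonicity together with the fact that this map coincides with $t\mapsto t$ for large $t$ gives surjectivity by the intermediate value theorem. The main subtlety is the delicate case $c=0$, where the derivative of the radial map vanishes at the origin; this is handled by noting the map is still strictly increasing on $[0,\infty)$ and injectivity at the origin follows from $\rho(t)>0$ for $t>0$.
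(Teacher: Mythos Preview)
Your construction is correct and is essentially the same as the paper's: both pick a chart, define the map $(y_1,\dots,y_r,\rho(|y|^2)y_{r+1},\dots,\rho(|y|^2)y_n)$ with a smooth transition function vanishing at $0$ and equal to $1$ past radius $1$, and extend by the identity. The only difference is that you supply explicit verifications of the rank and bijectivity claims (the block--triangular Jacobian computation and the monotonicity argument for $t\mapsto t\,\rho(c+t^2)$), whereas the paper simply asserts these properties of $h$ without proof.
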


\begin{proof} Take a chart $(U,\phi)$ (coordinate disk) around $x$. We will construct a map $g_U : U \rightarrow U$ that has the above properties on $U$, $g_U(x) = x$ and $g_U$ is identity outside a relatively compact subset of $U$. The lemma will follow by extending this map outside $U$ by the identity map.

Let $\gamma : \bb R \rightarrow \bb R$ be a smooth map with following properties. 

i. $\gamma(a) =0$ if $a \leq0$,

ii. $\gamma(a) = 1$ if $a\geq1$,

iii. $\gamma'(a) > 0$ for $0 <a<1$.

Define a map $h : \bb R^n \rightarrow \bb R^n$ by 
$$h(a_1,\ldots,a_n)=(a_1,\ldots,a_r,a_{r+1}\gamma(\|a\|^2),\ldots,a_n\gamma(\|a\|^2)).$$
where $a=(a_1,\ldots, a_n) \in \bb R^n$ and $\|a\|^2= a_1^2 + \cdots + a_n^2$.

Notice that the map $h$ is smooth and has the following properties:

i. it has rank $r$ at $0$,

ii. it has rank $n$ at any point $a\neq 0$,

iii. it is the identity outside the ball of radius 1,

iv. it is bijective.

Now define $g_U : U \rightarrow U$ by $g_U = \phi^{-1} \circ h \circ \phi$. Then, $h_U$ has the required properties. 
\end{proof}

\begin{proof}[Proof of Theorem \ref{thm31}] The implications ((1) $\Rightarrow$ (2)) and ((2) $\Rightarrow$ (3))  follow from Theorem \ref{thm21}. The only implication to be proved is ((3) $\Rightarrow$ (1)).

Suppose $\Sigma$ is not an $(a_f)$-regular prestratification. Then, there exists a sequence $\{x_i\}$ in a stratum $X$ converging to a point $y$ in a stratum $Y$ such that $T(\ak F^X_{x_i}X)$ converges to $\tau$ in the Grassmannian but $\tau$ does not contain $T_{y} (\ak F^f_Y)$ as a subspace. Let $v \in T_y(\ak F^f_Y)$ such that $v \not\in \tau$. Then, there exists a subspace $H$  of $T_y N$ of dimension $n - \dim T_y({\ak F^f_Y})$, ($\dim N = n$), not containing $v$ such that
\begin{align}
H \oplus T_y(\ak F^f_Y) &= T_yN \label{eq1}\\
H + \tau &\neq T_yN \label{eq2}
\end{align}

Moreover, there exists a sequence $\{H_i\}$ of subspaces of $T_{x_i}N$ of dimension $n - \dim T_y(\ak F^f_Y)$ such that for large enough $i$,
\begin{equation} 
H_i + T_{x_i}(\ak F^f_X) \neq T_{x_i}N. \label{eq3}
\end{equation}

By Lemma \ref{lem32} there exists a map $g : N \rightarrow N$ (after a suitable change of coordianates) with the following properties:

1. $g(y) = y$,

2. the rank of $g$ at $y$ is $n - \dim T_y(\ak F^f_Y)$,

3. the rank of $g$ at any point other than $y$ is $n$, and

4. $D_yg(T_yN)=H$.

We have $g \pitchfork \ak F^f_{\Sigma}$ since the rank of $g$ is $n$ at all points except $y$ and by (\ref{eq1}) $g$ is transverse to $\ak F^f_Y$ at $y$. 

Now it is easy to find a sequence of maps $\{g_i : N \rightarrow N\}$ converging to $g$ in the strong topology such that for large $i$, $g_i(y) = x_i$, and
\begin{equation}
D_yg_i(T_yN) = H_i. \label{eq4}
\end{equation}

By (\ref{eq3}) and (\ref{eq4}), for large $i$, $g_i \not\pitchfork \ak F^f_X$, which is a contradiction to the hypothesis that $\{g \in C^1(N,N) : g \pitchfork \ak F^f_{\Sigma}\}$ is open in the strong topology.
\end{proof}

\section{Thom regularity and $(t_f)$-regularity}

Let $N$ and $P$ be $C^1$-manifolds and $X$ and $Y$ be submanifolds of $N$. Let $f : N \rightarrow P$ be a $C^1$-map that has constant rank of $X$ and $Y$. Then, $X$ is said to be $(t_f)$-regular over $Y$ at $y \in Y \cap \overline X$ if 

{\bf $\boldsymbol{(t_f)}$-regularity} - Given a $C^1$ submanifold $S$ of $N$ transverse to the leaf of $\ak F^f_Y$ passing through $y$, there is a neighbourhood $U$ of $y$ in $N$ such that $S$ is transverse to $\ak F^f_X$ in $U$. 

Since spanning is an open condition, it follows at once that $(a_f)$-regularity implies $(t_f)$-regularity. Kaloshin \cite{Kaloshin} uses and gives a proof of this trivial fact in his article on Hilbert's sixteenth problem, see page 463, page 492 and proposition 2 in page 495 in \cite{Kaloshin}. 

We show that $(a_f)$-faults can be detected by $(t_f)$-regularity in the subanalytic case. Since the arguments are local we work with $\bb R^n$.

\begin{thm}\label{thm121} Let $X$, $Y$ be $C^1$-submanifolds of $\bb R^n$ with $0 \in Y \cap \overline{X}$, and let $Y$ be a subanalytic set. Let $f: \bb R^n \rightarrow \bb R^p$ be a subanalytic map (i.e. the graph of $f$ is subanalytic in $\bb R^n \times \bb R^p$), such that $f|_X$ and $f|_Y$ are of constant rank. Then $X$ is $(a_f)-$regular over $Y$ at $0$ if and only if for every semianalytic  $C^1$ submanifold $S$ transverse to $\ak F^f_Y$ at $0$, there is some neighbourhood of $0$ in which $S$ is transverse to $\ak F^f_X$.
\end{thm}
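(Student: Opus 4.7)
My plan is as follows. The forward direction is the spanning-openness argument stated just before the theorem: $(a_f)$ gives $\tau \supset T_0 \ak F^f_Y$ whenever $T_{x_i} \ak F^f_X \to \tau$ and $x_i \to 0$, so the open condition $T_0S + T_0\ak F^f_Y = \bb R^n$ propagates to nearby $x \in X$, regardless of whether $S$ is semianalytic.

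For the converse I will argue by contraposition. Assume $(a_f)$ fails at $0$, producing a sequence $x_i \to 0$ in $X$ with $T_{x_i}\ak F^f_X \to \tau$ and some $v \in T_0\ak F^f_Y \setminus \tau$. The goal is to produce a semianalytic $C^1$ submanifold $S$ through $0$, transverse to $\ak F^f_Y$ at $0$, which fails transversality to $\ak F^f_X$ at a sequence of points in $X$ accumulating at $0$.

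The first ingredient is curve selection. The Gauss-style map $x \mapsto \ker d(f|_X)_x \in G_{k_X}(\bb R^n)$ has subanalytic graph because $f$ is subanalytic (assuming the stratum $X$ is also subanalytic, as is standard in this context); intersecting with a small neighbourhood of $(0,\tau)$ and projecting gives a subanalytic subset of $X$ accumulating at $0$. The subanalytic curve selection lemma then yields an analytic arc $\gamma:[0,\epsilon)\to\bb R^n$ with $\gamma(0)=0$, $\gamma(t)\in X$ for $t>0$, $T_{\gamma(t)}\ak F^f_X \to \tau$, and, after refinement, $\gamma'(0)\in\tau$. Next I will choose a linear subspace $H \subset \bb R^n$ with $\tau \subset H$ and $v \notin H$, so that $H + T_0\ak F^f_Y = \bb R^n$ while $H + \tau = H$ is proper. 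After a linear coordinate change identifying $H$ with a coordinate subspace, I decompose $\gamma(t) = (\eta(t),\beta(t))$ with $\eta$ an analytic immersion into $H$ (using $\gamma'(0)\in H$) and $\beta$ analytic into $H^\perp$ vanishing to order $\geq 2$; then I take $S$ to be the graph over a neighbourhood of $0$ in $H$ of an analytic extension of $\beta\circ\eta^{-1}$. This $S$ is semianalytic, $C^1$ with $T_0 S = H$ transverse to $\ak F^f_Y$ at $0$, and contains the trace of $\gamma$.

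The hard part will be verifying non-transversality of $S$ to $\ak F^f_X$ at each $\gamma(t)$, not merely in the limit: non-spanning is a closed but not open condition in the Grassmannian, so $H + \tau \neq \bb R^n$ does not by itself give non-spanning at individual $\gamma(t)$. I plan to enforce this at the point level by one of two complementary tactics chosen according to the sizes of $k_X$ and $k_Y$: either take $\dim S$ minimal so that $\dim S + k_X < n$, forcing non-spanning by dimension count; or refine the curve selection so that $\gamma$ lies within a single leaf of $\ak F^f_X$ (that is, $f\circ\gamma$ is constant) and build $S$ to contain the nearby portion of that leaf, so that $T_{\gamma(t)}\ak F^f_X \subset T_{\gamma(t)} S$ and the sum is automatically proper. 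Either tactic contradicts semianalytic $(t_f)$-regularity at $0$, completing the contrapositive.
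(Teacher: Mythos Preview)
Your setup through curve selection is fine, and you correctly identify the crux: non-spanning is closed, so knowing $H + \tau \neq \bb R^n$ does not by itself force $T_{\gamma(t)}S + T_{\gamma(t)}\ak F^f_X \neq \bb R^n$ at each $\gamma(t)$. The gap is that your two proposed tactics do not cover all cases. Tactic (a) needs $\dim S + k_X < n$ together with $\dim S \ge n - k_Y$, hence $k_X < k_Y$. Tactic (b) requires an analytic arc $\gamma$ with $\gamma(0)=0$, $\gamma(t)\in X$ for $t>0$, and $f\circ\gamma$ constant; continuity then forces $f(\gamma(t))=f(0)$, so you need $f|_X^{-1}(f(0))$ to accumulate at $0$. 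Neither holds in the following algebraic example: $n=3$, $Y=\bb R\times\{0\}\times\{0\}$, $X=\{0\}\times\bb R\times(0,\infty)$, $f(x,y,z)=z$. Here $f|_Y\equiv 0$ so $T_0\ak F^f_Y=\bb R\times 0\times 0$, while every leaf of $\ak F^f_X$ has tangent $0\times\bb R\times 0$, so $(a_f)$ fails at $0$; yet $k_X=k_Y=1$ defeats (a), and $f|_X^{-1}(0)=\emptyset$ defeats (b). (Incidentally, the claim $\gamma'(0)\in\tau$ is also unjustified: the selected arc's initial direction lies in $T_0\bb R^n$ but has no reason to fall in the Grassmann limit of the leaf tangents.)

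What the paper does instead is to build $S$ so that the \emph{pointwise} containment $T_{\alpha_1(t)}\ak F^f_X \subset T_{\alpha_1(t)}S$ holds along the whole selected arc $\alpha_1$, without ever asking $\alpha_1$ to sit in a single leaf. Concretely, for each $t$ one takes the hyperplane $N_t=(\alpha_1'(t))^\perp$, sets $P_t=N_t\cap T_{\alpha_1(t)}\ak F^f_X$, projects $v$ to $v_t\in N_t$, and defines the $(n-2)$-plane $\sigma(t)=P_t\oplus(P_t\oplus\langle v_t\rangle)^{\perp_{N_t}}\subset N_t$; the union of the affine planes $\alpha_1(t)+\sigma(t)$ (reflected across $N_0$) is the required semianalytic $C^1$ hypersurface $S$. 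By construction $T_{\alpha_1(t)}S=\sigma(t)\oplus\langle\alpha_1'(t)\rangle\supset P_t\oplus\langle\alpha_1'(t)\rangle \supset T_{\alpha_1(t)}\ak F^f_X$, so non-transversality holds at \emph{every} $\alpha_1(t)$; and $v_0\notin\sigma(0)$ ensures $T_0S\oplus\langle v\rangle=\bb R^n$, whence $S\pitchfork_0\ak F^f_Y$. This pointwise absorption of the leaf tangent into $T S$ is exactly the idea your tactic (b) was reaching for, but achieved without any constraint on how $\alpha_1$ crosses the leaves.
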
 

\begin{proof}  Condition $(a_f)$ implies $(t_f)$ trivially. We show that $(t_f)$ implies $(a_f)$ with the hypothesis of the theorem.

Let $0 \in Y$ be an $(a_f)$-fault.  Then, there exists a sequence $\{x_i\}$ in $X$ converging to $0$ such that the limit of the tangent spaces $T_{x_i}(\ak F^f_X)$ converg es to $\tau$, but $\tau$ does not contain $T_0 (\ak F^f_Y)$. 

We will construct a semianalytic $C^1$ submanifold $S$ transverse to the leaf of $\ak F_Y$ passing through $0$ but  not transverse to the leaves of $\ak F^f_X$ on any neighbourhood around $0$.

Since $T_0 (\ak F^f_Y) \not\subset \tau$ there exists a unit vector $v \in T_0 (\ak F^f_Y)$ such that $v \not\in \tau$. This implies that there exist $\epsilon > 0$ and a positive number $n$ such that for all $i>n$, 
$$d(v, T_{x_i}(\ak F^f_X)) > \epsilon.$$
where $d(v, T_{x_i}(\ak F^f_X))$ denotes the distance between $v$ and $T_{x_i}(\ak F^f_X)$.

Let $m$ be the dimension of leaves of $\ak F^f_X$ and let $$V_1 = \bb R^n \times \{P \in G^n_m(\bb R) : d(v,P) > \epsilon\}$$
and
$$V_2 = \{(x, T_x(\ak F^f_X)): x \in X\} \subset \bb R^n \times G^n_m(\bb R),$$
where $G^n_m(\bb R)$ denotes the Grassmann bundle of $m$-dimensional subspaces of $\bb R^n$. 

The set $V_1$ is semialgebraic and we show that $V_2$ is subanalytic. In fact we just need to show that $\{(x, T_xX) : x \in X\}$ is subanalytic, which is precisely Lemma 1.6 in Verdier \cite{Verdier}. For, $T_x(\ak F^f_X) = \ker d_xf \cap T_x X$, and $\ker d_y f$ is a fixed subspace of $\bb R^n$ if we suppose that $f$ is a linear projection  (as we can since $f$ is the composition of an embedding onto its graph followed by a linear projection (cf. page 30 of Teissier \cite{Teissier})). Thus, $V_2$ is a subanalytic set. Semialgebraic sets are subanalytic, and the finite intersection of subanalytic sets is subanalytic (by Hironaka \cite{Hironaka}). Hence, $V_1 \cap V_2$ is subanalytic.

Notice that $(0,\tau) \in \overline{V_1 \cap V_2}$, thus by the curve selection lemma (see Proposition 3.9 in Hironaka \cite{Hironaka}), there is an analytic arc
$$\alpha : [0,1] \rightarrow \bb R^n \times G^n_m$$
given by $\alpha(t) = (\alpha_1(t),T_{\alpha_1(t)} (\ak F^f_X))$, such that $\alpha(0) = (0,\tau)$ and $\alpha(t) \in V_1 \cap V_2$ if $t > 0$.

Denote by $N_t \in G^n_{n-1}(\bb R)$ the orthogonal complement of the tangent space to the manifold-with-boundary $\alpha_1[0,1]$ and let $v_t$ be the orthogonal projection of $v$ into $N_t$. Denote by $\langle v \rangle$ the subspace spanned by $v$.

Let $\sigma: [0,1] \rightarrow G^n_{n-2}(\bb R)$ be the analytic curve defined by
$$\sigma(t) = P_t \oplus (P_t \oplus \langle v_t\rangle)^{\perp}$$
where $P_t = N_t \cap T_{\alpha_1(t)} (\ak F^f_X)$ and $()^{\perp}$ is the orthogonal complement in $N_t$.

Notice that, $v_t \not\in P_t$ and moreover $\sigma(t) \oplus \langle v \rangle = N_t$. 

Then the union of $\{\sigma(t)\}$ for $t\in [0,1]$, considered as embedded $(n-2)$-planes in $\bb R^n$ passing through points $\alpha_1(t)$ defines a semianalytic manifold-with-boundary $S'$ of dimension $(n-1)$. Reflection in $N_0$ extends $S'$ to a $C^1$-manifold $S$ which is a semianalytic subset of $\bb  R^n$ and which is transverse to $\ak F^f_ Y$ at $0$. 

Finally we show that $S$ is not transverse to $\ak F^f_X$ on any neighbourhood of $0$. Let $U$ be a neighbourhood around $0$. There exists some $t_0 \in (0,1]$ such that $U \cap \alpha_1(0,1] \supset \alpha_1(0,t_0]$. But $S'$ (and hence $S$) is not transverse to $\ak F^f_X$ at any point of $\alpha_1(0,1]$. For, if $A_t$ denotes the tangent space to the curve $\alpha_1(0,1]$ at $\alpha_1(t)$,
$$T_{\alpha_1(t)} \ak F^f_X = P_t \oplus A_t \subset \sigma(t) \oplus A_t = T_{\alpha_1(t)} S.$$ 
Theorem \ref{thm121} follows.
\end{proof}

\section{Geometric versions of Thom regularity}

Let $N$ and $P$ be $C^1$-manifolds and $X$ and $Y$ be submanifolds of $N$. Let $f : N \rightarrow P$ be a $C^1$-map that has constant rank of $X$ and $Y$. Then, $X$ is said to be $(a_f^s)$-regular over $Y$ at $y \in Y \cap \overline X$ if 

{\bf $\boldsymbol{(a_f^s)}$-regularity}  - Given a $C^1$ local retraction $\pi$ defined near $y$ onto the leaf of $\ak F^f_Y$ passing through $y$, there is a neighbourhood $U$ of $y$ in $\bb R^n$ such that $\pi|_{X \cap U}$ is a submersion on every leaf of $\ak F_X^f \cap U$. 

This is equivalent to saying that $X$ is $(\ak F^1_f)$-regular over $Y$ at $y \in Y \cap \overline X$, where we define

{\bf $\boldsymbol{(\ak F^k_f)}$-regularity}  - Given a $C^k$ foliation $\ak G$ of $N$ transverse to $\ak F^f_Y$ at $0$, there is a neighbourhood of $0$ in which $\ak G$ is transverse to $\ak F^f_X$.

We show that (also proved in Koike \cite{Koike} but our proof is much simpler):

 \begin{thm} \label{thm122} Let $f: N \rightarrow P$ be a $C^1$ map, between $C^1$ manifolds $N$ and $P$, $X$ and $Y$ be $C^1$ submanifolds of $N$ such that $f|_X$ and $f|_Y$ have constant rank, and let $0 \in Y \cap \overline{X}$. Then the following conditions are equivalent.
 
 i. $X$ is $(a_f)$-regular over $Y$ at $0$,
 
 ii. $X$ is $(a_f^s)$-regular over $Y$ at $0$.
 \end{thm}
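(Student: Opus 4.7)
My plan handles the two implications separately. The direction $(a_f)\Rightarrow(a_f^s)$ will rely on the openness of surjective linear maps. Given a $C^1$ local retraction $\pi$ near $0$ onto $L_0$ (the leaf of $\ak F^f_Y$ through $0$), I would argue by contradiction: if $\pi$ failed to be a submersion on leaves of $\ak F^f_X$ arbitrarily close to $0$, one could pick $x_i\to 0$ in $X$ with $D\pi_{x_i}|_{T_{x_i}(\ak F^f_X)}$ non-surjective onto $T_{\pi(x_i)}L_0$; after passing to a subsequence, $T_{x_i}(\ak F^f_X)\to\tau$ in the Grassmannian. By $(a_f)$, $T_0(\ak F^f_Y)\subset\tau$, and since $\pi|_{L_0}=\mathrm{id}$, $D\pi_0|_{T_0L_0}=\mathrm{id}$, whence $D\pi_0(\tau)\supset T_0L_0$, i.e.\ $D\pi_0|_\tau$ surjects onto $T_0L_0$. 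The sequence $D\pi_{x_i}|_{T_{x_i}(\ak F^f_X)}$ converges to this surjective limit, and surjective linear maps form an open set, so these maps must also surject for $i$ large -- contradiction.

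For $(a_f^s)\Rightarrow(a_f)$ I would proceed by contraposition. Assume $X$ is not $(a_f)$-regular over $Y$ at $0$, and pick $x_i\to 0$ in $X$ with $T_{x_i}(\ak F^f_X)\to\tau$ and a vector $v\in T_0(\ak F^f_Y)\setminus\tau$. The plan is to construct a $C^1$ retraction $\pi$ onto $L_0$ near $0$ whose restriction to the leaf of $\ak F^f_X$ through $x_i$ is not a submersion at $x_i$ for every large $i$, contradicting $(a_f^s)$. Working in a chart straightening $L_0$ to a linear subspace, I would choose a hyperplane $W\subset T_0L_0$ with $v\notin W$ and $W\supset\tau\cap T_0L_0$ (possible since $v\notin\tau\cap T_0L_0$); an easy check using $v\notin W$ then yields $v\notin\tau+W$, so the prescription $\alpha(v)=1$, $\alpha|_W=0$, $\alpha|_\tau=0$ determines a consistent linear functional $\alpha$ on $\tau+W+\langle v\rangle$ that extends to $T_0N$. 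Split $L_0=\langle v\rangle\oplus W$, let $\pi':U\to W$ be the linear projection with kernel $\langle v\rangle$ plus a fixed complement of $T_0L_0$, and write $\pi=(\pi_v,\pi')$ with $\pi_v$ a real-valued function still to be built.

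The perturbation itself, following a foliated version of Perkal's lemma, would be
\[
\pi_v(x)\;=\;\alpha(x)\;+\;\sum_i\chi_i(x)\,\ell_i(x-x_i),
\]
where $\chi_i$ is a $C^1$ bump equal to $1$ near $x_i$ supported in a small ball $B_i$ about $x_i$ with $B_i\cap L_0=\emptyset$ (achievable after passing to a subsequence, using that $X$ is disjoint from $Y\supset L_0$ near $0$ in the stratified set-up, and choosing the $B_i$ pairwise disjoint), and $\ell_i$ is a linear functional on $T_{x_i}N$ satisfying $\ell_i|_{T_{x_i}(\ak F^f_X)}=-\alpha|_{T_{x_i}(\ak F^f_X)}$, of operator norm comparable to that of the restriction $\alpha|_{T_{x_i}(\ak F^f_X)}$. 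Because $\alpha|_\tau=0$ and $T_{x_i}(\ak F^f_X)\to\tau$, this restriction tends to $0$ in operator norm; shrinking the supports fast enough makes the series $C^1$-convergent, and since the $B_i$ avoid $L_0$, $\pi_v$ agrees with $\alpha$ on $L_0$, so $\pi=(\pi_v,\pi')$ is a genuine $C^1$ retraction onto $L_0$. By construction $d(\pi_v)_{x_i}|_{T_{x_i}(\ak F^f_X)}=0$, so $D\pi_{x_i}(T_{x_i}(\ak F^f_X))\subset W$ misses $v$ and $\pi$ fails to be a submersion on the leaf of $\ak F^f_X$ through $x_i$, as required.

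The hardest part will be this perturbation step -- the foliated Perkal lemma itself -- which must produce an infinite sum of bumps that is globally $C^1$, respects $\pi|_{L_0}=\mathrm{id}$, and realizes the prescribed covector condition at each $x_i$ without disturbing the others. The quantitative input driving the construction is the decay $\alpha|_{T_{x_i}(\ak F^f_X)}\to 0$ in operator norm, a consequence of $\alpha|_\tau=0$ and $T_{x_i}(\ak F^f_X)\to\tau$; once this estimate is in hand, the bump radii and amplitudes can be tuned so that the retraction identity on $L_0$, the $C^1$-convergence of the series, and the targeted zeroing of $d\pi_v$ on $T_{x_i}(\ak F^f_X)$ are all maintained simultaneously.
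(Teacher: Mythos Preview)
Your proof is correct and rests on the same core device as the paper's---a Perkal-type $C^1$ perturbation driven by the convergence $T_{x_i}(\ak F^f_X)\to\tau$---but you and the authors place the perturbation in different objects. The paper first makes a linear change of coordinates so that the standard projection $\pi_s:\bb R^n\to\bb R^s\times\{0\}$ is already non-submersive on $\tau$, and then invokes the foliated Perkal lemma (their Lemma~\ref{lem124}) to produce a $C^1$ chart $\psi$ fixing each $x_i$ with $T_{x_i}(\psi(\ak F^f_X))=\tau$ for large $i$; the failing retraction is then $\psi^{-1}\circ\pi_s\circ\psi$. You instead keep the chart fixed and perturb the retraction directly, inserting bump-supported linear corrections at the $x_i$ so that the $v$-component of $D\pi$ vanishes on $T_{x_i}(\ak F^f_X)$. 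Your packaging is more self-contained (you essentially unfold the relevant piece of Perkal's construction inside the argument) and makes the decay $\|\alpha|_{T_{x_i}(\ak F^f_X)}\|\to 0$ visibly the engine of $C^1$-convergence; the paper's version is cleaner once Lemma~\ref{lem124} is taken as a black box, since all the tangent spaces are simultaneously normalized to the single limit $\tau$ and a fixed linear projection does the job. For the easy direction your openness-of-surjections argument is exactly what the paper gestures at when it cites Wall and Thom. One small point: your requirement $B_i\cap L_0=\emptyset$ relies on the $x_i$ staying off $L_0$, which you correctly attribute to the stratified hypothesis that $X$ and $Y$ are disjoint; the paper's chart-based route needs the analogous fact implicitly (so that the Perkal chart can be arranged to fix $L_0$), so this is not a discrepancy in strength.
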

 
 We state the following lemmas that can be obtained by slight modifications to Perkal's perturbation theorems 1.2 and 1.4 in \cite{Perkal}. 

\begin{lem}\label{lem123} Let $X$ be a submanifold of $\bb R^n$, let $x_0 \in \overline{X}$ and let $\{x_i\}$ be a sequence in $X$ converging to $x_0$. For each sequence $\{L_i\}_{i=0}^{\infty}$ of linear bijections from $\bb R^n$ to $\bb R^n$ converging to the identity, there is a $C^1$ chart $(V,\psi)$  around $x_0$ such that $\psi(x_i) = x_i$ and $D{\psi(x_i)}=L_i$ for large $i$.
\end{lem}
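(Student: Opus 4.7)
The plan is to build $\psi$ as the identity plus a bump-function perturbation localized at each $x_i$. Translating, we may take $x_0=0$; since the conclusion only concerns large $i$, we may further assume the $x_i$ are distinct and nonzero. Fix a smooth bump $\phi\colon[0,\infty)\to[0,1]$ with $\phi\equiv1$ on $[0,1/2]$ and $\phi\equiv0$ on $[1,\infty)$, and set $C_\phi=1+\|\phi'\|_\infty$. Because $L_i\to I$, pick $i_0$ so that $\|L_i-I\|<1/(2C_\phi)$ for $i\geq i_0$, and for each such $i$ choose a radius $0<r_i\leq\tfrac{1}{3}\min\bigl(|x_i|,\inf_{j\neq i,\,j\geq i_0}|x_i-x_j|\bigr)$; the balls $B_i=B(x_i,r_i)$, $i\geq i_0$, are then pairwise disjoint and exclude $0$.

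Define $\psi(x)=x+\eta(x)$ by
\begin{equation*}
\eta(x)=\begin{cases}\phi(|x-x_i|/r_i)\,(L_i-I)(x-x_i),&x\in B_i,\ i\geq i_0,\\ 0,&\text{otherwise.}\end{cases}
\end{equation*}
On the inner core $B(x_i,r_i/2)$ the bump is constantly $1$, so $\psi(x)=x_i+L_i(x-x_i)$, giving $\psi(x_i)=x_i$ and $D\psi(x_i)=L_i$ for every $i\geq i_0$, which is the content of the conclusion. A routine product-rule calculation on $B_i$ yields the estimate $\|D\eta(x)\|\leq C_\phi\|L_i-I\|<1/2$, independent of $r_i$, while $D\eta\equiv 0$ off the union of balls. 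Thus $\|D\eta\|_\infty<1/2$ globally, so the mean-value inequality yields $|\psi(y)-\psi(z)|\geq|y-z|/2$ for all $y,z$, and $D\psi=I+D\eta$ is everywhere invertible; hence $\psi$ is a $C^1$ diffeomorphism onto its image on the complement of $0$.

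The only delicate step is $C^1$-regularity at the limit point $0$. Given $\epsilon>0$, choose $K\geq i_0$ with $\|L_i-I\|<\epsilon/C_\phi$ for $i\geq K$; then $\delta:=\tfrac{2}{3}\min\{|x_i|:i_0\leq i<K\}>0$, since the finitely many $x_i$'s involved are nonzero. For $|x|<\delta$, if $x$ lies in some $B_i$ with $i\geq i_0$ then $|x_i|\leq|x|+r_i\leq|x|+|x_i|/3$ forces $|x_i|\leq\tfrac{3}{2}|x|<\min_{i_0\leq i<K}|x_i|$, so $i\geq K$ and $\|D\eta(x)\|\leq C_\phi\|L_i-I\|<\epsilon$; otherwise $D\eta(x)=0$. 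Therefore $D\psi$ is continuous at $0$ with value $I$, and restricting $\psi$ to a small enough neighborhood $V$ of $0$ produces the desired chart. The crux of the proof is precisely this last continuity step, which works only because the derivative bound on $B_i$ is independent of the (necessarily shrinking) radius $r_i$, an independence engineered by scaling the bump argument as $|x-x_i|/r_i$.
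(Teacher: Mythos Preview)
Your construction is correct. The paper does not give its own proof of this lemma; it simply states that the result ``can be obtained by slight modifications to Perkal's perturbation theorems 1.2 and 1.4 in \cite{Perkal}.'' Your argument---bump-function corrections supported on disjoint balls $B(x_i,r_i)$, with the bump argument scaled as $|x-x_i|/r_i$ so that the derivative bound on each ball depends only on $\|L_i-I\|$ and not on the shrinking radii---is exactly the mechanism underlying Perkal's perturbation theorem, so your proof is not so much a different route as a self-contained version of the reference the paper invokes.

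One small remark on the opening reduction: the phrase ``since the conclusion only concerns large $i$, we may further assume the $x_i$ are distinct'' is slightly glib. If the original sequence had infinitely many repetitions $x_i=x_j$ with $L_i\neq L_j$, the conclusion would be impossible and no reduction helps; what really justifies the step is that in every intended application (and implicitly in the lemma as used later) the $x_i$ lie in $X$ while $x_0\in\overline X$ is approached nontrivially, so one may discard finitely many terms and, if necessary, thin out repeated values carrying identical $L_i$. This does not affect the substance of your argument.
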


\begin{lem}\label{lem124} Let $X$ be a manifold of $\bb R^n$, let $x_0 \in \overline{X}$ and let $\{x_i\}$ be a sequence in $X$ converging to $x_0$. Let $\ak F$ be a foliation of $X$. If $\{T_{x_i} (\ak F)\}$ and a sequence  $\{\tau_i\}$ of linear subspaces of $\bb R^n$ converge to a common limit $\tau$ in the Grassmann bundle then there is a $C^1$ chart $\psi$ around $x_0$ such that for large $i$, $\psi(x_i)=x_i$ and $T_{x_i}(\ak \psi(\ak F)) = \tau_i$.
\end{lem}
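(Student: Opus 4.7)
The plan is to reduce the statement to Lemma \ref{lem123} by producing an appropriate sequence $\{L_i\}$ of linear bijections of $\bb R^n$ converging to the identity that realises the required ``rotation'' from $T_{x_i}(\ak F)$ to $\tau_i$, feeding this sequence into Lemma \ref{lem123}, and then verifying the tangential condition by the chain rule.

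First I would set $k = \dim \ak F$ and exploit the hypothesis that the two sequences of $k$-planes $T_{x_i}(\ak F)$ and $\tau_i$ share a common limit $\tau$ in the Grassmannian $G^n_k(\bb R)$. Fix an orthonormal basis $\{e_1,\dots,e_k\}$ of $\tau$ and extend it to an orthonormal basis $\{e_1,\dots,e_n\}$ of $\bb R^n$ with $\{e_{k+1},\dots,e_n\}$ spanning $\tau^{\perp}$. Using continuity of the orthogonal projector onto a subspace that varies in the Grassmannian, I can project each $e_j$ ($j\le k$) into $T_{x_i}(\ak F)$ and Gram--Schmidt the result to obtain an orthonormal basis $\{v^i_1,\dots,v^i_k\}$ of $T_{x_i}(\ak F)$ with $v^i_j \to e_j$; applying the same procedure in $T_{x_i}(\ak F)^{\perp}$ extends this to an orthonormal basis $\{v^i_1,\dots,v^i_n\}$ of $\bb R^n$ converging frame-wise to $\{e_1,\dots,e_n\}$. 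Doing the same starting from $\tau_i$ produces an orthonormal basis $\{w^i_1,\dots,w^i_n\}$ of $\bb R^n$ with $w^i_j \to e_j$ and with $\{w^i_1,\dots,w^i_k\}$ spanning $\tau_i$. Define $L_i \in GL(n,\bb R)$ by $L_i(v^i_j)=w^i_j$ for each $j$. Then $L_i(T_{x_i}(\ak F)) = \tau_i$ by construction, and $L_i \to \mathrm{Id}$ in operator norm because both frames converge to the same orthonormal frame of $\bb R^n$.

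Next I would apply Lemma \ref{lem123} to the sequence $\{L_i\}$, obtaining a $C^1$ chart $\psi$ around $x_0$ with $\psi(x_i)=x_i$ and $D\psi(x_i) = L_i$ for large $i$. The pushforward $\psi(\ak F)$ is a $C^1$ foliation of $\psi(X\cap V)$ and, because $\psi(x_i)=x_i$, its tangent at $x_i$ is
\[
T_{x_i}(\psi(\ak F)) \;=\; D\psi(x_i)\bigl(T_{x_i}(\ak F)\bigr) \;=\; L_i\bigl(T_{x_i}(\ak F)\bigr) \;=\; \tau_i,
\]
which is the required conclusion.

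The only delicate point is the construction of the $L_i$. I expect the main obstacle to be the book-keeping needed to ensure that the bijection is built from two frames that \emph{both} converge to a common orthonormal frame of the whole of $\bb R^n$, not merely of $\tau$, since this is what forces $L_i$ to be close to the identity on the orthogonal complements as well; matching only the $k$-planes themselves would not guarantee $\|L_i-\mathrm{Id}\|\to 0$.
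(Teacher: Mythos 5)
Your reduction of Lemma \ref{lem124} to Lemma \ref{lem123} is correct: the frame construction does give orthogonal maps $L_i$ with $L_i(T_{x_i}(\ak F))=\tau_i$ and $L_i\to\mathrm{Id}$, and the chain-rule computation $T_{x_i}(\psi(\ak F))=D\psi(x_i)(T_{x_i}(\ak F))=\tau_i$ then closes the argument. The paper itself offers no proof of this lemma, deferring to ``slight modifications'' of Perkal's perturbation theorems 1.2 and 1.4, and your argument is precisely the expected derivation (it mirrors how Perkal obtains his second perturbation theorem from the first), so there is nothing genuinely different to compare. You are also right to flag the one delicate point: matching full orthonormal frames of $\bb R^n$ converging to a common frame, rather than just the $k$-planes, is what forces $\|L_i-\mathrm{Id}\|\to 0$.
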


\begin{proof}[Proof of Theorem \ref{thm122}] That $(a_f)$ implies $(a^s_f)$ follows very easily following the proofs that $(a)$ implies $(a_s)$ in Wall \cite{Wall3} or page 9 in Thom \cite{Thom2}.

Let $s$ be the dimension of leaves of $\ak F^f_Y$. Since the argument is local, we can assume without loss of generality that $T_0{(\ak F^f_Y)} =\bb R^s \times \{0\}$.

Suppose $X$ is not $(a_f)$-regular over $Y$ at $0$. There exists an $f$-good sequence $\{x_n\}$ in $X$ such that its Grassmann limit does not contain $T_0(\ak F^f_Y)$. Let $(V_1,V_2)$ be the basis for $\tau$ where $V_1$ is a basis of $\tau \cap T_0(\ak F^f_Y)$ and $V_2$ is a basis for the orthogonal complement of $\tau \cap  T_0(\ak F^f_Y)$ in $\tau$. 

Extend the set of vectors consisting of vectors in $V_2$, and a basis for $Y$ to a basis for $\bb R^n$ and let $A$ be the linear map from $\bb R^n$ to $\bb R^n$ that maps this basis to itself.

Notice that $\pi_s(A(V_1)) = V_1$ while $\pi_s(A(V_2)) = 0$ and $\pi_s(A(\tau))=\tau \cap Y$, where $\pi_s : \bb R^n \rightarrow \bb R^s\times \{0\}$ is the natural projection. This gives a linear bijection $A$ such that $\pi_s|_{A(\tau)}$ is not submersive. 

Thus we can assume, after making this linear change of chart, that $\pi|_{\tau}$ is not submersive. By Lemma \ref{lem124} there is a $C^1$ chart $\psi$ such that, for large $i$, $T_{x_i}(\psi(\ak F^f_X)) = \tau$. But
$$D_{x_i}(\pi_s|_{\psi(\ak F^f_X)}) = (D_{x_i} \pi_s)(T_{x_i}(\ak F^f_X)) = \pi_s|_{\tau}$$ so that $\pi_{\psi} = \psi^{-1} \circ \pi_s \circ \psi$ fails to be a submersion on $V \cap X$ for some neighbourhood $V$ of $0$. Hence $(a_f)$-regularity fails.
\end{proof} 

Another proof of the above theorem can be given using the condition  $(\ak F^1_f)$, which is equivalent to $(a_f)$, and Trotman's idea of ripples \cite{Trotman4}. We cannot replace $C^1$ by $C^2$ in the statement of Theorem \ref{thm122}, see Perkal \cite{Perkal} or Kambouchner and Trotman \cite{Trotman5}.

\bibliographystyle{amsplain}
\bibliography{mainbibliography}

\end{document}